\documentclass[a4paper,draft,reqno,12pt]{amsart}
\usepackage[english]{babel}
\usepackage{amsmath}
\usepackage{amssymb}
\usepackage{amscd}
\usepackage{amsthm}
\usepackage{euscript}
\usepackage{mathtools}
\usepackage[all]{xy}
\usepackage[usenames]{color}
\usepackage{colortbl}
\newtheorem{proposition}{Proposition}
\newtheorem{conjecture}{Conjecture}

\newtheorem{lemma}{Lemma}
\newtheorem{theorem}{Theorem}

\theoremstyle{definition}

\newtheorem{example}{Example}
\theoremstyle{remark}

\def\diag{{\rm diag}}
\def\SAut{{\rm SAut}}

\def\Aut{{\rm Aut}}
\def\dim{{\rm dim\,}}

\def\A{\mathbb {A}}
\def\K{\mathbb {K}}
\def\G{\mathbb {G}}

\def\a{\lambda}
\def\sl{\mathfrak{sl}}
\def\g{\mathfrak{g}}
\def\h{\mathfrak{h}}
\def\sp{\mathfrak{sp}_{2n}(\mathbb {K})}

\sloppy
\textwidth=16.3cm
\oddsidemargin=0cm
\topmargin=0cm
\headheight=0cm
\headsep=1cm
\textheight=23.5cm
\evensidemargin=0cm

\begin{document}
	
	\date{}
	\title[On nilpotent generators of the symplectic Lie algebra]{On nilpotent generators of the symplectic Lie algebra}
	\author{Alisa Chistopolskaya}
	\address{Higher School of Economics, Faculty of Computer Science, Kochnovsky Proezd 3, Moscow, 125319 Russia}
	\email{achistopolskaya@gmail.com}
	
	\subjclass[2010]{Primary 17B05, 17B22; \ Secondary 15A04}
	
	\keywords{Lie algebra, generators, nilpotent, root decomposition, symplectomorphism}
	\thanks{The research was supported by the grant RSF 19-11-00172.}

	\begin{abstract}
		Let $\sp$ be the symplectic Lie algebra over an algebraically closed field of characteristic zero. We prove that for any nonzero nilpotent element $X \in \sp$ there exists a nilpotent element $Y \in \sp$ such that $X$ and $Y$ generate $\sp$. 
	\end{abstract}
	\maketitle
	
	\section{Introduction}
	It is an important problem to find a minimal generating set of a given algebra. This problem was studied actively for semisimple Lie algebras. In 1951, Kuranishi \cite{MK} observed that any semisimple Lie algebra over a field of characteristic zero can be generated by two elements. Twenty-five years later, Ionescu \cite{TI} proved that for any nonzero element $X$ of a complex or real simple Lie algebra $\g$ there exists an element $Y$ such that the elements $X$ and $Y$ generate the Lie algebra $\g$. 
	In 2009, Bois~\cite{JB} proved that any simple Lie algebra in characteristic different from~$2$ and $3$ can be generated by two elements. He also extended Ionescu’s result to classical simple Lie
	algebras over a field of characteristic different from~$2$ and $3$. In~\cite{AC} we obtained an analogue of Ionescu's result for nilpotent generators of the special linear Lie algebra $\sl_n$. This paper is devoted to the case of the symplectic Lie algebra over an algebraically closed field $\K$ of characteristic zero.
	
	\begin{theorem} \label{theorem}
		For any nonzero nilpotent element $X \in \sp$ there exists a nilpotent element $Y \in \sp$ such that $X$ and $Y$ generate $\sp$.
	\end{theorem}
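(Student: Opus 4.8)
\medskip\noindent\textbf{Outline of the intended proof.}
Fix the realization of $\sp$ attached to the symplectic form with anti-diagonal Gram matrix: the diagonal matrices of $\sp$ form a Cartan subalgebra $\h$, the root system $\Phi$ is of type $C_n$, and the strictly upper (resp. lower) triangular matrices of $\sp$ form the nilradical $\mathfrak n$ of the standard Borel (resp. the nilradical $\mathfrak n^-$ of the opposite Borel), so that $\sp=\mathfrak n^-\oplus\h\oplus\mathfrak n$; in particular every element of $\mathfrak n^-$ is nilpotent. Since the property ``$X$ and $Y$ generate $\sp$'' is invariant under simultaneous conjugation by the symplectic group, and conjugation carries nilpotent elements to nilpotent elements, we may freely replace $X$ by any element of its nilpotent $\mathrm{Sp}_{2n}(\K)$-orbit. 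By the classification of nilpotent orbits in $\sp$ these orbits correspond to partitions $\lambda=(\lambda_1\ge\lambda_2\ge\cdots)$ of $2n$ in which each odd part occurs an even number of times, and for each such $\lambda$ we fix a concrete representative $X_\lambda\in\mathfrak n$ built as a direct sum of elementary nilpotent blocks (one block of even size, or a symplectically fused pair of blocks of equal odd size). It thus suffices to produce, for every admissible $\lambda$, a nilpotent $Y$ with $\langle X_\lambda,Y\rangle=\sp$, and we always look for $Y$ inside $\mathfrak n^-$, which makes nilpotency automatic.

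The generation criterion we plan to use is the following. If a subalgebra $\mathfrak a\subseteq\sp$ contains a regular semisimple element $s$ whose root values $\alpha(s)$, $\alpha\in\Phi$, are pairwise distinct, then $\mathfrak a$ is stable under $\operatorname{ad}(s)$ and hence splits as $\mathfrak a=(\mathfrak a\cap\h_s)\oplus\bigoplus_{\alpha\in\Phi(s)}\mathfrak g_\alpha$, where $\h_s$ is the Cartan subalgebra containing $s$ and $\Phi(s)\subseteq\Phi$ is a closed subset; moreover, since the projections onto the $\operatorname{ad}(s)$-eigenspaces are polynomials in $\operatorname{ad}(s)$, every $\h_s$-root component of an element of $\mathfrak a$ again lies in $\mathfrak a$. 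Consequently, to conclude $\mathfrak a=\sp$ it is enough to know that $\mathfrak a$ contains such an $s$ together with a set of elements whose $\h_s$-root components exhaust a subset of $\Phi$ whose closure is all of $\Phi$: then $\Phi(s)=\Phi$, and the identities $[\mathfrak g_\alpha,\mathfrak g_{-\alpha}]=\K h_\alpha$ force $\h_s\subseteq\mathfrak a$. In the regular case $\lambda=(2n)$, where $X_\lambda=e=\sum_{i=1}^n e_i$ is a principal nilpotent, we take $Y=\sum_{i=1}^n c_i f_{\alpha_i}\in\mathfrak n^-$; since the difference of two distinct simple roots is never a root, $[e,Y]=\sum_i c_i h_{\alpha_i}\in\h$, which for a sufficiently general choice of the scalars $c_i$ is regular with pairwise distinct root values, so the criterion applies at once with $\h_s=\h$, with $e$ supplying all positive and $Y$ all negative simple root vectors.

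For a general partition we keep the same scheme but build $Y$ in two layers. Writing $X_\lambda$ as a direct sum of elementary blocks, we let the first layer of $Y$ be, on each block, the single-block analogue of the element $\sum_i c_i f_{\alpha_i}$ used in the regular case — a sufficiently general combination of negative root vectors supported on that block — and we add a second layer of off-block entries, the symplectic counterpart of the cyclic term $E_{2n,1}$ used for $\sl_n$ in \cite{AC}, linking consecutive blocks; the whole of $Y$ stays in $\mathfrak n^-$. The first layer, paired with $X_\lambda$, already yields regular-semisimple Cartan elements block by block, and the second layer is tuned so that a short, controlled sequence of brackets with $X_\lambda$ extracts from $\langle X_\lambda,Y\rangle$ a global regular semisimple element $s$ with pairwise distinct root values. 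By the criterion above, it then remains only to check that the $\h_s$-root components of $X_\lambda$ and of $Y$ together exhaust a closure-generating subset of $\Phi$ — which, by construction, they do.

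The step I expect to be the real obstacle is exactly this last verification for partitions with several parts, and above all when some parts equal $1$: a block of size $1$ contributes nothing to $X_\lambda$, and a pair of such blocks contributes an $\mathfrak{sp}_2\cong\sl_2$ summand on which $X_\lambda$ vanishes entirely, so the linking entries of $Y$ must be chosen with particular care to pull those inert directions into $\langle X_\lambda,Y\rangle$ and, simultaneously, to keep the extracted semisimple element regular with distinct root values. Organizing the iterated-bracket bookkeeping uniformly in $n$ and in the shape of $\lambda$ — so that one can actually see that all the required root components appear — is where the bulk of the work lies; once a suitable regular semisimple element has been located inside the generated subalgebra, the criterion above finishes the argument essentially mechanically.
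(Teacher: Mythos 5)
There is a genuine gap. Your plan treats the regular orbit $\lambda=(2n)$ convincingly --- and your generation criterion is essentially the paper's Lemma~\ref{Vandermonde} --- but for every other partition you only describe a two-layer ansatz for $Y$ and then explicitly defer the decisive verification: that the iterated brackets of $X_\lambda$ with $Y$ actually produce a consistent regular semisimple element, and that the root components of $X_\lambda$ and $Y$ with respect to the resulting Cartan subalgebra exhaust a closure-generating subset of $\Phi$. That verification is the entire content of the theorem for non-regular orbits, and it is hardest exactly where you say it is: for partitions with many parts equal to $1$, above all the minimal orbit $(2,1^{2n-2})$, where $X_\lambda$ is a single root vector and almost every block is inert. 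Nothing in your outline shows how the ``linking'' entries of $Y$ can simultaneously stay in $\mathfrak{n}^-$, reach the inert $\sl_2$-summands, and still yield a semisimple element with pairwise distinct root values; as written the argument does not close.

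The paper avoids the partition-by-partition analysis entirely by an observation your proposal is missing: for a fixed $Y_0$ the set of $Z$ such that $Z$ and $Y_0$ generate $\sp$ is Zariski-open, and by \cite[Theorem~4.3.3]{CM} the closure of every nonzero nilpotent orbit contains the lowest weight vector (the minimal orbit). Hence it suffices to settle the single worst case $X=v_{-\psi}$; openness then transports the conclusion to every nonzero nilpotent orbit, with $Y$ taken in the orbit of $Y_0$. For that one case the paper takes $Y_0=\sum_{\alpha\in\Delta}v_{\alpha}$ (a principal nilpotent) and shows that $T=Y_0+(-1)^{n-1}v_{-\psi}$ is semisimple with eigenvalues the $2n$-th roots of unity, hence consistent, and that $v_{-\psi}$ has nonzero components in every root space of the Cartan subalgebra containing $T$ (a Vandermonde computation with the matrix of eigenvectors of $T$); your own criterion then finishes. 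Replacing your general-partition construction by this closure-plus-openness reduction turns the combinatorial bookkeeping you flag as ``the bulk of the work'' into a short degeneration argument, and the one remaining case is precisely the one your scheme handles worst.
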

	
	The idea of the proof is to reduce the problem to the case when $X$ is the lowest weight vector in the adjoint representation \cite[Theorem~4.3.3]{CM}. In this case we take as $Y$ an element from the principal nilpotent orbit.
	
	In the last section we discuss a conjecture generalizing Theorem~\ref{theorem} to an arbitrary simple Lie algebra. 
	We also hope for possible applications of our results to transitivity properties of the symplectomorphism group of $\A^{2n}$.
	More precisely, Conjecture 2 claims that for all~$n \geqslant 2$ one can find three one-parameter subgroups $U_1$, $U_2$, $U_3$ of symplectomorphisms of~$\A^{2n}$ 
	such that the group $H = \langle U_1, U_2, U_3 \rangle $ acts infinitely transitively on~$\A^{2n}$. 
	
	\smallskip
	
	The author is grateful to her supervisor Ivan Arzhantsev for posing the problem and permanent support and to Ernest Vinberg for useful discussions and suggestions.
	
	\section{Main results}
	
	Let $\g$ be a semisimple Lie algebra over an algebraically closed field $\K$ of characteristic zero and $\Phi$ be the root system of the algebra $\g$. Fix a Cartan subalgebra $\h$ and consider the root space decomposition with respect to $\h$: 
	$$
	\g = \h \oplus \bigoplus\limits_{\alpha \in \Phi} \g_{\alpha}. 
	$$
	
	An element $T \in \h$ is called \textit{consistent} if $\alpha(T) \neq 0$ holds for any $\alpha \in \Phi$, and for all~$\alpha$,~$\beta \in \Phi$ the condition $\alpha \neq \beta$ implies $\alpha(T) \neq \beta(T)$. Denote by $v_{\alpha}$ an arbitrary nonzero element of the line $\g_{\alpha}$.
	
	\begin{lemma} \label{Vandermonde}
		A consistent element $T$ and the element $N = \sum\limits_{\alpha \in \Phi}v_{\alpha}$ generate $\g$. 
	\end{lemma}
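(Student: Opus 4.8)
The plan is to show that the subalgebra $\g' = \langle T, N\rangle$ generated by $T$ and $N$ exhausts $\g$, by first recovering every root space $\g_\alpha$ and then the whole Cartan subalgebra $\h$. The key device is the Vandermonde-type argument hinted at by the name of the lemma: since $T$ is consistent, the operator $\ad T$ acts on $N = \sum_{\alpha\in\Phi} v_\alpha$ semisimply with pairwise distinct eigenvalues $\alpha(T)$ on the pairwise distinct lines $\g_\alpha$. Applying successive powers of $\ad T$ to $N$, one obtains the vectors $(\ad T)^k N = \sum_{\alpha\in\Phi}\alpha(T)^k v_\alpha$ for $k = 0, 1, 2, \dots$, and the Vandermonde matrix $\bigl(\alpha(T)^k\bigr)$ indexed by $\alpha\in\Phi$ and $k = 0,\dots,|\Phi|-1$ is invertible precisely because the $\alpha(T)$ are distinct. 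Hence each $v_\alpha$ lies in the span of $\{N, \ad T\, N, (\ad T)^2 N, \dots\}\subseteq \g'$, so $\g_\alpha\subseteq\g'$ for every $\alpha\in\Phi$, and of course $T\in\g'$.

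Next I would recover the Cartan subalgebra. For any root $\alpha$, both $\g_\alpha$ and $\g_{-\alpha}$ now lie in $\g'$, so the bracket $[\g_\alpha,\g_{-\alpha}] = \K h_\alpha\subseteq\g'$, where $h_\alpha$ is the coroot. Since $\g$ is semisimple, the coroots $h_\alpha$, $\alpha\in\Phi$, span $\h$; therefore $\h\subseteq\g'$. Combining this with the previous paragraph gives $\g' \supseteq \h\oplus\bigoplus_{\alpha\in\Phi}\g_\alpha = \g$, and since $\g'$ is by definition a subalgebra of $\g$, we conclude $\g' = \g$.

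The only genuinely delicate point is the very first step: I must be sure that $v_\alpha$ really does appear with nonzero coefficient after isolating the $\alpha(T)$-eigenspace of $\ad T$ within $\K N$-orbit, which is exactly what the consistency hypothesis guarantees — distinctness of the $\alpha(T)$ makes the relevant Vandermonde determinant nonzero, and the condition $\alpha(T)\neq 0$ is not even needed here (it will matter elsewhere in the paper). One should also note that $N$ is a well-defined element independent of the choices of the $v_\alpha$ up to the obvious rescaling, and rescaling each $v_\alpha$ does not affect the span computation. No case analysis on the type of $\g$ is required, so this lemma is uniform; the work in the rest of the paper will be to engineer, for a prescribed nilpotent $X$, a partner $Y$ so that the pair is conjugate (or otherwise reducible) to a pair of the form covered here or to the principal-nilpotent situation described after the theorem.
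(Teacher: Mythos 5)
Your proposal is correct and follows essentially the same route as the paper: apply successive powers of $\mathrm{ad}\,T$ to $N$, invert the Vandermonde matrix built from the pairwise distinct values $\alpha(T)$ to isolate each $v_\alpha$, and then recover $\h$ from the brackets $[\g_\alpha,\g_{-\alpha}]$ (a step the paper leaves implicit but you spell out). The only cosmetic difference is that the paper uses the exponents $k=1,\dots,|\Phi|$ rather than $k=0,\dots,|\Phi|-1$, which is why it also invokes $\alpha(T)\neq 0$; your observation that this condition is not needed when one starts from $N$ itself is accurate.
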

	
	\begin{proof}
		Consider the following $m = \dim \g - \dim \h$ elements:
		$$
		A_1 = [T, N],\quad A_i = [T, A_{i-1}],\quad i = 2,\ldots,m.
		$$ 
		Note that all elements $A_i$ belong to $\bigoplus\limits_{\alpha \in \Phi} \g_{\alpha}
		$ and $A_1,\ldots, A_m$ are linearly independent. Indeed, if we consider their coordinates in the basis $\{v_{\alpha}\}_{\alpha \in \Phi}$, we come to a Vandermonde matrix for numbers $\alpha(T)$, where $\alpha\in \Phi$. Since $T$ satisfies the consistensy conditions, it follows that this Vandermonde matrix has a nonzero determinant. 
		
		Hence, the matrices $A_i$ form a basis of the space~$\bigoplus\limits_{\alpha \in \Phi} \g_{\alpha}$. This means that the subalgebra generated by $T$ and $N$ contains all $v_{\alpha}$ and coincides with $\g$.
	\end{proof}
	
	Let $V$ be a $2n$-dimensional vector space over $\K$. The symplectic Lie algebra $\sp$ is the algebra of all linear transformations of $V$ which annihilate a non-degenerate skew-symmetric bilinear form~$\Omega$. Below in this section we consider $\g = \sp$.
	
	\begin{lemma} \label{lowestnilp}
		Theorem~\ref{theorem} holds if $X$ is the lowest weight vector.
	\end{lemma}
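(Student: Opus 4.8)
The plan is to take $X$ to be the root vector $v_{-\theta}$ attached to the lowest root $-\theta$, where $\theta$ is the highest root of the root system $\Phi$ of type $C_n$; any lowest weight vector of the adjoint representation has this form after a suitable choice of $\h$ and of the set of simple roots $\Pi$, so we may assume $X = v_{-\theta}$. As $Y$ I take the principal nilpotent element $Y = \sum_{\alpha \in \Pi} v_{\alpha}$: on the defining $2n$-dimensional representation $V$ this element acts as a single Jordan block, so $Y$ lies in the principal nilpotent orbit. Let $(Y, h, Y^{-})$ be the corresponding principal $\sl_2$-triple. Then $\g = \sp$ decomposes into irreducible $\operatorname{ad}\sl_2$-submodules $\g = \bigoplus_{i=1}^{n} W_i$ of dimensions $2m_i+1$, where $m_1 < \dots < m_n$ are the exponents $1, 3, \dots, 2n-1$ of $C_n$; write $W = W_n$ for the largest summand, of dimension $4n-1$. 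Since the $\operatorname{ad}h$-eigenvalue of $v_{-\theta}$ is $-2\operatorname{ht}(\theta) = -(4n-2)$, which is the smallest weight occurring in $\g$ and has multiplicity one there, the element $X = v_{-\theta}$ is a lowest weight vector of $W$.

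Put $\mathfrak s = \langle X, Y \rangle$. Since $\mathfrak s$ is a subalgebra containing $Y$, it contains all iterated brackets $(\operatorname{ad}Y)^{k} X$, $k \geqslant 0$, and these span $W$; hence $\mathfrak s \supseteq W$. In particular $\mathfrak s$ contains $v_{\theta}$ (the highest weight vector of $W$), the $\operatorname{ad}h$-weight-zero vector $T_0 := (\operatorname{ad}Y)^{2n-1} X$, which lies in $\h$, and the $\operatorname{ad}h$-weight-$(\pm 2)$ vectors $N^{+} := (\operatorname{ad}Y)^{2n} X \in \bigoplus_{\alpha \in \Pi}\g_{\alpha}$ and $N^{-} := (\operatorname{ad}Y)^{2n-2} X \in \bigoplus_{\alpha \in \Pi}\g_{-\alpha}$. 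The main computation — done most transparently with $Y = \sum_{k=1}^{2n-1} E_{k,k+1}$ and $X = E_{2n,1}$ in a basis of $V$ adapted to the Jordan block — gives $T_0 = \diag(d_1,\dots,d_{2n})$ with $d_i = (-1)^{i-1}\binom{2n-1}{i-1}$, and $N^{\pm} = \sum_{\alpha \in \Pi} c^{\pm}_{\alpha}\,v_{\pm\alpha}$ with all coefficients $c^{\pm}_{\alpha}$ nonzero, and shows that the numbers $\alpha(T_0)$, $\alpha \in \Pi$, are (up to sign) $\binom{2n}{1}, \binom{2n}{2}, \dots, \binom{2n}{n}$, hence pairwise distinct.

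It then follows that, for each sign, the $n$ elements $N^{\pm}, (\operatorname{ad}T_0)N^{\pm}, \dots, (\operatorname{ad}T_0)^{n-1}N^{\pm}$ have, in the basis $\{v_{\pm\alpha}\}_{\alpha\in\Pi}$, coordinate matrix equal to a Vandermonde matrix in the distinct numbers $\alpha(T_0)$ times the invertible diagonal matrix $\diag(c^{\pm}_{\alpha})$; this matrix is invertible, so $\mathfrak s$ contains $v_{\alpha}$ and $v_{-\alpha}$ for every $\alpha \in \Pi$. The Chevalley generators $v_{\pm\alpha}$, $\alpha \in \Pi$, generate the simple Lie algebra $\sp$, so $\mathfrak s = \sp$ and the proof is complete. (Alternatively, at this stage $\mathfrak s \supseteq \h$ and $\mathfrak s$ contains $v_{\beta}$ for every $\beta \in \Phi$, hence it contains $N = \sum_{\beta \in \Phi} v_{\beta}$ and a consistent element $T \in \h$, and one finishes by Lemma~\ref{Vandermonde}.) The hard part is the explicit identification of the weight vectors $T_0$ and $N^{\pm}$ of $W$ and the verification that the $c^{\pm}_{\alpha}$ are nonzero and the $\alpha(T_0)$ pairwise distinct — the one place where the combinatorics of the $C_n$ root system is really used; everything else is the $\operatorname{ad}\sl_2$-module bookkeeping above, which reduces the lemma to that finite computation.
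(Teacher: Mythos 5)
Your proposal is correct, but it reaches the conclusion by a genuinely different route than the paper, even though both end up with the same pair $X=v_{-\theta}$, $Y=\sum_{\alpha\in\Pi}v_\alpha$. The paper forms the ``cyclic element'' $T=(-1)^{n-1}v_{-\psi}+\sum_{\alpha\in\Delta}v_\alpha$, checks explicitly that $T^{2n}=E$, so $T$ is regular semisimple with the $2n$-th roots of unity as eigenvalues (hence consistent), diagonalizes $T$ by a root-of-unity Vandermonde matrix to verify that $X$ has nonzero components in every root space of the new Cartan $\widetilde{\h}$, and then applies Lemma~\ref{Vandermonde} with the roles of the two generators interchanged. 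You instead never diagonalize $X+Y$: you use the principal $\mathfrak{sl}_2$-triple through $Y$, observe that $-2\,\mathrm{ht}(\theta)=-(4n-2)$ is the minimal $\operatorname{ad}h$-weight and occurs with multiplicity one (the top exponent $2n-1$ of $C_n$ is simple), so $X$ is the lowest weight vector of the unique largest irreducible summand $W$ and $\langle X,Y\rangle\supseteq W$; you then harvest from $W$ the zero-weight element $T_0$ and the weight-$(\pm2)$ elements $N^{\pm}$ and run an $n\times n$ Vandermonde argument on the simple root spaces to recover the Chevalley generators. I checked your asserted computations and they are right: $(\operatorname{ad}Y)^{2n-1}E_{2n,1}=\operatorname{diag}\bigl((-1)^{i-1}\tbinom{2n-1}{i-1}\bigr)$, the simple-root values are $\pm\tbinom{2n}{1},\dots,\pm\tbinom{2n}{n}$ (pairwise distinct since strictly increasing in absolute value), and the coefficients of $N^{\pm}$ on the simple root vectors are nonzero binomial coefficients. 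The trade-off: the paper's argument requires guessing and verifying the spectrum of $X+Y$ (essentially Kostant's theorem on regular semisimple cyclic elements, made explicit), with a $2n\times 2n$ Vandermonde over roots of unity; yours replaces this by standard $\operatorname{ad}\mathfrak{sl}_2$ bookkeeping plus a smaller $n\times n$ Vandermonde over binomial coefficients, at the cost of a type-specific computation of $T_0$ and $N^{\pm}$. Your closing remark that the hard content is concentrated in that finite computation is accurate, and it would be worth writing it out in full rather than only asserting the result, but there is no gap in the argument.
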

	
	\begin{proof}
		
		Let us fix a basis $e_1,\dots,e_{2n}$ in $V$ such that $\Omega = \begin{pmatrix}
		0 & E \\
		-E & 0
		\end{pmatrix}$. Then the subspace of all diagonal matrices in $\g$ is a Cartan
		subalgebra $\h$. Consider the set of simple roots~$\Delta$ corresponding to the matrices $E_{i, i+1} - E_{n+i+1, n+i}$, $i=1,\ldots,n-1$ and $E_{n-1, 2n}$. Choose these matrices as $v_\alpha$, $\alpha \in \Delta$.
		Denote by~$\psi$ the highest root with respect to $\Delta$.  It corresponds to the matrix unit $v_{\psi} = E_{1, n+1}$, and the lowest root $-\psi$
		corresponds to the matrix $v_{-\psi} = E_{n+1, 1}$. 
		Define
		$$
		T = (-1)^{n-1}v_{-\psi} + \sum_{\a\in \Delta} v_{\a} = 
		\begin{pmatrix}
		0 & 1   & \ldots & 0  & 0  & \ldots & 0 & 0\\
		\vdots & \vdots & \ddots &   \vdots & \vdots & \ddots & \vdots & \vdots\\
		0 & 0 &  \dots & 1  & 0  & \ldots & 0 & 0\\
		0 & 0 &  \dots & 0 & 0  & \ldots & 0 & 1 \\
		(-1)^{n-1} & 0   & \ldots & 0 & 0  & \ldots & 0 & 0\\
		0 & 0 &  \dots & 0 & -1  & \ldots & 0 & 0\\
		\vdots & \vdots & \ddots  & \vdots & \vdots & \ddots & \vdots & \vdots\\
		0 & 0  &  \dots & 0 & 0  & \ldots&-1 & 0
		\end{pmatrix}.
		$$
		Our aim is to apply Lemma~\ref{Vandermonde} to elements $T$ and $X=v_{-\psi}$.
		Note that Lemma~\ref{Vandermonde} holds if we replace $N$ by $N + H$, where $H \in \h$. Since $T^{2n} = E$, the element $T$ is semisimple. Moreover, if $\lambda$ is a root of unity of order $2n$ then 
		$$ 
		v = e_1 + \lambda e_2 + \lambda^2e_3+\dots+ \lambda^{n-1}e_n +  \frac{(-1)^{n-1}}{\lambda}e_{n+1}+ \frac{(-1)^n}{\lambda^2}e_{n+2} +\dots+ \frac{1}{\lambda^n}e_{2n}
		$$ 
		is an eigenvector of $T$ with the eigenvalue~$\lambda$. It follows that all eigenvalues of the operator~$T$ are roots of unity of order $2n$ and thus $T$ is consistent.  
		
		Since $T$ is semisimple, one can include it into a Cartan subalgebra $\widetilde{\h}$. Consider the root space decomposition of $\g$ with respect to $\widetilde{\h}$: 
			$
			\g = \widetilde{\h} \oplus \bigoplus\limits_{\beta \in \Phi} \widetilde{\g}_{\beta}. 
			$
			Let us now show that $X = \widetilde{H} + \sum\limits_{\beta\in \Phi} \widetilde{v}_{\beta}$, where $\widetilde{H} \in \widetilde{\h}$ and $\widetilde{v}_{\beta}$ is an arbitrary nonzero element of the line $\widetilde{\g}_{\beta}$. Subalgebras $\h$ and~$\widetilde{\h}$ are conjugated by an element of the group $\operatorname {Sp}_{2n}(\K)$. In other words, there exists a matrix $C \in \operatorname {Sp}_{2n}$ such that $C^{-1}TC$ is a diagonal matrix. It remains to show that all entries of the matrix $C^{-1}E_{n+1, 1}C$ outside the main diagonal are non-zero. 
		
		It sufficient to check that all entries of the $(n+1)$th column of the matrix $C^{-1}$ are non-zero, which is equivalent to the fact that $e_{n+1}$ and any $2n-1$ eigenvectors of the operator~$T$ are linearly independent. Let $\xi$ be a primitive root of unity of order $2n$. Since the matrix~$C$ consists of eigenvectors of the operator~$T$, it coincides with the matrix
		$$ 
		\begin{pmatrix}
		1 & 1 & 1 &  \ldots & 1\\
		1 & \xi & \xi^2 &  \dots & \xi^{2n-1} \\
		1 & \xi^2 & \xi^4 &  \dots & (\xi^2)^{2n-1} \\
		\vdots  & \vdots & \vdots & \ddots & \vdots \\
		1 & \xi^{n-1} & (\xi^{n-1})^2 &  \dots & (\xi^{n-1})^{2n-1}\\
		1 & \xi^{-1} & (\xi^{-1})^2 &  \dots & (\xi^{-1})^{2n-1}\\
		\vdots  & \vdots & \vdots & \ddots & \vdots \\
		1 & \xi^{-n} & (\xi^{-n})^2 &  \dots & (\xi^{-n})^{2n-1}\\
		\end{pmatrix}
		=
		\begin{pmatrix}
		1 & 1 & 1 &  \ldots & 1\\
		1 & \xi & \xi^2 &  \dots & \xi^{2n-1} \\
		1 & \xi^2 & \xi^4 &  \dots & (\xi^2)^{2n-1} \\
		\vdots  & \vdots & \vdots & \ddots & \vdots \\
		1 & \xi^{n-1} & (\xi^{n-1})^2 &  \dots & (\xi^{n-1})^{2n-1}\\
		1 & \xi^{2n-1} & (\xi^{2n-1})^2 &  \dots & (\xi^{2n-1})^{2n-1}\\
		\vdots  & \vdots & \vdots & \ddots & \vdots \\
		1 & \xi^n & (\xi^n)^2 &  \dots & (\xi^{n})^{2n-1}\\
		\end{pmatrix}
		$$
		up to permutation and scalar multiplication of rows or columns. If we replace any column of this matrix by coordinates of the vector $e_{n+1}$, we get a Vandermonde matrix for $2n-1$ different numbers from the set $\{1,\xi,\dots,\xi^{2n-1}\}$. 
		
		Using Lemma~\ref{Vandermonde} we obtain that $T$ and $X = v_{-\psi}$ generate $\sp$. So elements $X$ and $Y = T + (-1)^{n}\cdot X=\sum\limits_{\a\in \Delta} v_{\a}$ generate $\sp$ and the element $Y$ is nilpotent.
	\end{proof} 
	
	\begin{proof}[Proof of the Theorem~\ref{theorem}]
		Let $X$ be a non-zero nilpotent element of $\sp$. The closure of the orbit of $X$ under the adjoint action of $\operatorname {Sp}_{2n}(\K)$ contains the lowest weight vector $X_0$ \cite[Theorem~4.3.3]{CM}. Lemma~\ref{lowestnilp} implies that there exists a nilpotent $Y_0 \in \sp$ such that $X_0$ and $Y_0$ generate $\sp$. Note that the set of elements $Z$ such that $Z$ and $Y_0$ generate $\sp$ is open in Zariski topology.
		So this set contains an element from the orbit of $X$.  It follows that for the element $X$ there exists an element $Y$ in the orbit of $Y_0$ such that $X$ and $Y$ generate~$\sp$.
	\end{proof}

	\section{Examples}
	Let us give two examples with specific pairs of nilpotent matrices generating $\sp$. 
	\begin{example}
		It follows from Lemma~\ref{lowestnilp} that the matrices $N = \sum\limits_{i=1}^{n-1}{(E_{i,i+1} - E_{n+i+1,n+i})} + E_{n, 2n}$
		and $E_{n+1, 1}$ generate $\sp$.
	\end{example}
	Let $\g$ be a semisimple Lie algebra over an algebraically closed field $\K$ of characteristic zero. Let $\Phi$ be the root system of the algebra~$\g$ and $\Delta$ be a set of simple roots. Denote by $\Phi^{+}$ and $\Phi^{-}$ the set of positive and negative roots of $\Phi$ with respect to $\Delta$. Consider the elements $N = \sum\limits_{\alpha \in \Delta}v_{\alpha}$ and $M = \sum\limits_{\alpha \in \Delta}{\lambda_{\alpha}}v_{-\alpha}$, where $\{\lambda_{\alpha}\}_{\alpha \in\Delta}$ are choosen in such a way that the element $T = [N, M]$ satisfies the following conditions: $\alpha(T) \neq 0$ holds for any $\alpha \in \Delta$, and for all~$\alpha$, $\beta \in \Delta$ the condition $\alpha \neq \beta$ implies $\alpha(T) \neq \beta(T)$. 
	\begin{proposition}\label{proposition}
		The nilpotent elements $N$ and $M$ generate $\g$. 
	\end{proposition}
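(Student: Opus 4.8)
The plan is to show that the Lie subalgebra $\mathfrak a = \langle N, M \rangle$ generated by $N$ and $M$ is all of $\g$, following the strategy of Lemma~\ref{Vandermonde}: from $N$, $M$ and $T = [N,M]$ I would first recover the individual simple root vectors $v_\alpha$ and $v_{-\alpha}$ ($\alpha \in \Delta$) by a Vandermonde argument, and then conclude by the fact that those vectors generate $\g$.

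First I would compute $T$. Since the difference of two distinct simple roots is not a root, $[v_\alpha, v_{-\beta}] = 0$ whenever $\alpha \neq \beta$ in $\Delta$, so
$$
T = \sum_{\alpha \in \Delta} \lambda_\alpha [v_\alpha, v_{-\alpha}] = \sum_{\alpha \in \Delta} \lambda_\alpha c_\alpha h_\alpha \in \h,
$$
where $h_\alpha$ is the coroot of $\alpha$ and $c_\alpha \neq 0$; by hypothesis the numbers $\alpha(T)$, $\alpha \in \Delta$, are nonzero and pairwise distinct. One should also record that the $\lambda_\alpha$ are all nonzero: this may always be arranged together with the consistency of $T$ on $\Delta$, and it is in fact necessary, since e.g. in $\sl_3$ the elements $v_{\alpha_1} + v_{\alpha_2}$ and $v_{-\alpha_1}$ generate only a $5$-dimensional subalgebra although $[N,M]$ is still consistent on $\Delta$.

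Next I would carry out the two extractions. The subspaces $\bigoplus_{\alpha \in \Delta} \g_\alpha$ and $\bigoplus_{\alpha \in \Delta} \g_{-\alpha}$ are invariant under $\mathrm{ad}(T)$, which acts on $v_{\pm\alpha}$ as multiplication by $\pm\alpha(T)$. Applying the operators $\mathrm{ad}(T)^0, \dots, \mathrm{ad}(T)^{|\Delta|-1}$ to $N$, respectively to $M$, and invoking the Vandermonde determinant for the distinct numbers $\alpha(T)$ exactly as in the proof of Lemma~\ref{Vandermonde}, I would get that $\mathfrak a$ contains a basis of each of these two subspaces — the nonvanishing of the $\lambda_\alpha$ being what lets one separate the $v_{-\alpha}$. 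Hence $v_\alpha, v_{-\alpha} \in \mathfrak a$ for every $\alpha \in \Delta$.

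To finish, I would recall the standard fact that the simple root vectors $\{v_\alpha : \alpha \in \Delta\}$ generate the maximal nilpotent subalgebra $\mathfrak n^{+} = \bigoplus_{\alpha \in \Phi^{+}} \g_\alpha$, and symmetrically $\{v_{-\alpha}\}$ generate $\mathfrak n^{-}$; and $\mathfrak n^{+}$ together with $\mathfrak n^{-}$ generate $\g$, because $[v_\alpha, v_{-\alpha}]$ is a nonzero multiple of $h_\alpha$ and the $h_\alpha$, $\alpha \in \Delta$, span $\h$. Therefore $\mathfrak a = \g$, and $N$, $M$ are nilpotent since they lie in $\mathfrak n^{+}$, resp. $\mathfrak n^{-}$, on which $\mathrm{ad}$ acts by strictly raising, resp. lowering, the height of roots and is hence nilpotent. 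The only step needing genuine care is the remark that the $\lambda_\alpha$ must be taken nonzero, combined with the observation that $T$ is consistent only on $\Delta$ and not on all of $\Phi$, so that Lemma~\ref{Vandermonde} cannot be quoted verbatim but has to be rerun on the two $|\Delta|$-dimensional $\mathrm{ad}(T)$-invariant subspaces; everything else is routine structure theory.
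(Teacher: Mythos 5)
Your proof is correct and follows essentially the same route as the paper's: run the Vandermonde argument of Lemma~\ref{Vandermonde} with $T=[N,M]$ separately on the $\mathrm{ad}(T)$-invariant subspaces $\bigoplus_{\alpha\in\Delta}\g_{\alpha}$ and $\bigoplus_{\alpha\in\Delta}\g_{-\alpha}$ to extract the individual simple root vectors, then use that $\{v_{\alpha}\}_{\alpha\in\Delta}$ and $\{v_{-\alpha}\}_{\alpha\in\Delta}$ generate $\bigoplus_{\alpha\in\Phi^{+}}\g_{\alpha}$ and $\bigoplus_{\alpha\in\Phi^{-}}\g_{\alpha}$, which together generate $\g$. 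Your side remark that all $\lambda_{\alpha}$ must be nonzero is a worthwhile refinement: the stated conditions on $T$ alone do not force this (e.g.\ in $\sl_3$ one may take $\lambda_{\alpha_2}=0$ and still have $T$ consistent on $\Delta$, yet $N$ and $M$ then generate only a $5$-dimensional subalgebra), so the recovery of the $v_{-\alpha}$ from $M$ silently uses $\lambda_{\alpha}\neq 0$, a hypothesis satisfied in the paper's Example~2 but not made explicit in the statement.
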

	\begin{proof}
		Similarly to the proof of Lemma~\ref{Vandermonde}, we obtain that the subspace $\sum\limits_{\alpha \in \Delta}{\g_{\alpha}}$ is contained in the subalgebra $\mathfrak{a}$ generated by the elements $T$ and $N$. It follows that the subalgebra~$\sum\limits_{\alpha \in \Phi^{+}}{\g_{\alpha}}$ is contained in $\mathfrak{a}$.
		Similarly, the subalgebra $\sum\limits_{\alpha \in \Phi^{-}}{\g_{\alpha}}$ is contained in $\mathfrak{a}$ as well. Hence $N$ and $M$ generate~$\g$. 
	\end{proof}
	
	The following example illustrates Proposition~\ref{proposition} for $\g = \sp$. 
	\begin{example}
		Let us consider the matrix $N$ from Example 1 and the matrix
		$$
		M = \begin{pmatrix}
		0    & \ldots & 0 & 0 & 0 & 0  & \ldots & 0 \\
		\a_1   & \dots & 0 & 0 & 0 & 0  & \ldots & 0 \\
		\vdots & \ddots & \vdots & \vdots &\vdots & \vdots & \ddots & \vdots \\
		0  &  \dots & \a_{n-1} & 0 & 0  & 0  & \ldots & 0\\
		0 & 0 &  \dots & 0 & 0 & -\a_1  & \ldots & 0 \\
		\vdots & \vdots & \ddots & \vdots  & \vdots & \vdots & \ddots & \vdots\\
		0 & 0  &  \dots & 0 & 0 & 0  & \ldots&-\a_{n-1} \\
		0 & 0   & \ldots & \a_n & 0 & 0  & \ldots & 0 \\
		\end{pmatrix}, \text{ where $\a_i = 3^i$. }
		$$ 
		
		Define $T = [N, M] = \diag(\a_1,\a_2-\a_1, \dots, \a_n-\a_{n-1}, -\a_1, \a_1 - \a_2, \dots, \a_{n-1} - \a_n)$. Simple roots from $\Delta$ take pairwise different nonzero values on $T$, so $M$ and $N$ satisfy conditions of Proposition~\ref{proposition}.
		
	\end{example}
	
	\section{Conjectures and remarks}
	Let us make some observations on the case of an arbitrary simple Lie algebra $\g$ over an algebraically closed field $\K$ of characteristic zero.
	\begin{conjecture}\label{1}
		For any nonzero nilpotent element $X \in \g$ there exists a nilpotent element $Y \in \g$ such that $X$ and $Y$ generate $\g$.
	\end{conjecture}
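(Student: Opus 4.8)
The plan is to imitate the proof of Theorem~\ref{theorem}. First I would isolate the reduction step, which is type-free. Let $\Delta$ be a set of simple roots, $\psi$ the highest root, and $v_{-\psi} \in \g_{-\psi}$ a lowest root vector; its orbit under the adjoint group $G$ is the minimal nilpotent orbit, and by \cite[Theorem~4.3.3]{CM} it lies in the closure of the orbit of every nonzero nilpotent $X \in \g$. For a fixed $Y_0 \in \g$ the set of $Z \in \g$ generating $\g$ together with $Y_0$ is Zariski-open: the subalgebra generated by $Z$ and $Y_0$ is spanned by the iterated brackets of $Z$, $Y_0$ of length at most $\dim \g$, these depend polynomially on $Z$, and the condition that they span $\g$ is the non-vanishing of one of finitely many $(\dim \g)$-minors. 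Hence, as soon as we know that $v_{-\psi}$ and some nilpotent $Y_0$ generate $\g$, this open set is nonempty, so it meets $\overline{G \cdot X}$ at $v_{-\psi}$ and therefore meets the dense orbit $G \cdot X$; choosing $g$ with $g \cdot X$ in the set, the elements $X$ and $g^{-1} \cdot Y_0$ generate $\g$, and $g^{-1} \cdot Y_0$ is nilpotent. So it suffices to settle the case $X = v_{-\psi}$.

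For $X = v_{-\psi}$ I would follow the scheme of Lemma~\ref{lowestnilp}. Consider the principal $\Z$-grading $\g = \bigoplus_i \g_i$, where $\g_i$ is the span of the root spaces $\g_\alpha$ with $\alpha$ of height $i$; then $\g_1 = \bigoplus_{\alpha \in \Delta} \g_\alpha$ and $v_{-\psi} \in \g_{-(h-1)}$, with $h$ the Coxeter number. Since $-(h-1) \equiv 1 \pmod h$, reducing this grading modulo $h$ gives a Vinberg grading attached to a Coxeter element, whose degree-$\bar 1$ component is precisely $W := \bigl( \bigoplus_{\alpha \in \Delta} \g_\alpha \bigr) \oplus \g_{-\psi}$; by the theory of cyclic elements (Kostant; Elashvili--Kac--Vinberg) a generic element of $W$ is regular semisimple. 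I would then want to pick scalars $c$ and $c_\alpha$ so that $T = c\, v_{-\psi} + \sum_{\alpha \in \Delta} c_\alpha v_\alpha \in W$ is moreover consistent, and so that, in the root decomposition of $\g$ with respect to a Cartan subalgebra $\widetilde{\h} \ni T$, the vector $v_{-\psi}$ has nonzero component in every root space, i.e. $v_{-\psi} = \widetilde{H} + \sum_{\beta} \widetilde{v}_\beta$ with all $\widetilde{v}_\beta \neq 0$. Granting this, Lemma~\ref{Vandermonde}, in the form that allows $N$ to be replaced by $N + H$ with $H \in \widetilde{\h}$, shows that $T$ and $v_{-\psi}$ generate $\g$; hence so do $v_{-\psi}$ and $Y_0 := T - c\, v_{-\psi} = \sum_{\alpha \in \Delta} c_\alpha v_\alpha$, which lies in the principal nilpotent orbit.

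Everything thus comes down to two claims about $T$, and I expect these to be the main obstacle --- which is presumably why the statement is only a conjecture. First, consistency: one needs the regular semisimple element $T \in W$ to take pairwise distinct nonzero values on the roots with respect to $\widetilde{\h}$; this is a further Zariski-open condition, so the issue is that it holds for \emph{some} $T \in W$. For $\g = \sl_n$ and $\g = \sp$ this is exactly the computation with $h$-th roots of unity carried out in \cite{AC} and in the proof of Lemma~\ref{lowestnilp}, and a uniform verification --- presumably via the eigenvalues $\zeta^{m_1}, \dots, \zeta^{m_r}$ of a Coxeter element, where $\zeta$ is a primitive $h$-th root of unity and $m_1, \dots, m_r$ are the exponents of $\g$ --- would have to be supplied. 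Second, full support of $v_{-\psi}$: the set of $T \in W$ for which $v_{-\psi}$ has a vanishing component in some root space of its own Cartan must be shown to be a proper subvariety of $W$; in Lemma~\ref{lowestnilp} this reduces to a Vandermonde non-vanishing inside the matrix of eigenvectors of $T$, and finding a type-independent substitute for that argument is the part I find least clear. A complete proof would most plausibly be organized by Cartan type, working with a convenient faithful representation --- the defining representation in the classical cases, a minimal or the adjoint representation in the exceptional ones --- and carrying out, case by case, the matrix computation already done for $\sp$.
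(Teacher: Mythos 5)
First, a point of order: the statement you were asked to prove is Conjecture~\ref{1}, which the paper explicitly leaves open. There is no proof of record to compare against; the paper proves only the case $\g=\sp$ (Theorem~\ref{theorem}) and adds some remarks about the general case. Measured against those remarks, the first half of your sketch is correct and is exactly the paper's own observation: the reduction to the lowest weight vector $X=v_{-\psi}$ via \cite[Theorem~4.3.3]{CM} and the Zariski-openness of the generation condition is type-free, so the conjecture does reduce to the analogue of Lemma~\ref{lowestnilp}.

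The genuine gap is in the second half, and it is not merely the ``further verification'' you anticipate: the uniform construction you propose provably fails for some simple types. The paper states this explicitly: for $\g=\sl_{2n}(\K)$ the elements $X=v_{-\psi}$ and $Y=\sum_{\alpha\in\Delta}v_\alpha$ generate not $\g$ but its proper subalgebra $\sp$. The obstruction sits precisely at your ``consistency'' claim. The cyclic element $T=c\,v_{-\psi}+\sum_{\alpha\in\Delta}c_\alpha v_\alpha$ is indeed regular semisimple by Kostant \cite{BK}, but for $\sl_{2n}(\K)$ its eigenvalues in the defining representation are the $2n$-th roots of unity $\zeta^k$, and since $\zeta^{k+n}=-\zeta^{k}$ the differences of eigenvalues --- i.e.\ the values of the roots on $T$ --- coincide in pairs: distinct roots take equal values, so $T$ is never consistent and Lemma~\ref{Vandermonde} cannot be applied. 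Moreover, since all elements of $W=\g_{-\psi}\oplus\bigoplus_{\alpha\in\Delta}\g_\alpha$ with full support are conjugate up to a torus action and rescaling, no choice of the scalars $c$, $c_\alpha$ repairs this; the coincidences reflect a diagram automorphism, which is why the computer verification of \cite[Theorem~2.8]{DG} cited in the paper settles the conjecture only for types whose system of simple roots has no automorphism. So your plan cannot be completed uniformly: the remaining cases require a genuinely different choice of $Y_0$ (as in \cite{AC} for $\sl_n$), and the statement remains a conjecture.
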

	
	The final part of the proof of Theorem~\ref{theorem} holds for any simple Lie algebra $\g$. Hence, in the case of an arbitrary simple Lie algebra the proof of Conjecture~\ref{1} reduces to Lemma~\ref{lowestnilp}. The first idea is to take the same nilpotent $Y$ as in the proof of Lemma~\ref{lowestnilp}, but it does not always work. For example, for $\g=\sl_{2n}(\K)$ the elements $X$ and $Y$ from Lemma~\ref{lowestnilp} generate not~$\g$ but its proper subalgebra $\sp$. It can be shown that the elements $X$ and $Y$ always generate a reductive Lie algebra. In \cite[Section 6]{BK} Kostant showed that the sum of these elements is a semisimple regular element. 
	
	This paper had been prepared when the author found the work \cite{DG}. One can notice that \cite[Theorem~2.8]{DG} implies Conjecture~$1$ for Lie algebras whose system of simple roots has no automorphism. The proof is based on computations in computer algebra system GAP. It is used to verify that the lowest weight vector and the element $Y$ from the proof of Lemma~\ref{lowestnilp} generate the Lie algebra $\g$. 
	
	\smallskip	
	
	Let us now come to a possible application of Theorem~\ref{theorem}. An action of a group $H$ on a set $X$ is called infinitely transitive if for all positive integer $m$ for any two tuples of pairwise distinct points $x_1,\dots,x_m$ and $y_1,\dots,y_m$ in $X$ there exists an element $h \in H$ such that $h\cdot x_i = y_i$ for all $1 \leq i \leq m$.  
	
	Let $\A^n$ be the affine space of dimension $n$ and $\G_a$ be the additive group of the ground field~$\K$. Denote by $\SAut(\A^n)$ the subgroup of $\Aut(\A^n)$ generated by all $\G_a$-subgroups in~$\Aut(\A^n)$. It is easy to see that for $n \geq 2$ the group $\SAut(\A^n)$ acts infinitely transitively on $\A^n$. It is an interesting question whether there exists a finite number of $\G_a$-subgroups in $\SAut(\A^n)$ such that the group generated by them acts infinitely transitively on $\A^n$. Our result in~\cite{AC} enabled to prove the following theorem. 
	\begin{theorem}\cite[Theorem~5.17]{AKZ}
		For all $n \geqslant 2$ one can find three $\G_a$-subgroups $U_1$, $U_2$, $U_3$ in $\SAut(\A^n)$ 
		such that the group $H = \langle U_1, U_2, U_3 \rangle $ acts infinitely transitively on $\A^n$.
	\end{theorem}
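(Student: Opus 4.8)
The plan is to produce two of the three subgroups from a linear $\SL_n$-action (invoking \cite{AC} so that only two are needed for this), and then a third, genuinely non-linear, one-parameter subgroup that upgrades the resulting transitivity to infinite transitivity.

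First, recall that every linear unipotent one-parameter subgroup of $\Aut(\A^n)$ is a $\G_a$-subgroup, and that $\SL_n$ is generated by such subgroups; hence $\SL_n \subseteq \SAut(\A^n)$, acting on $\A^n$ in the standard way. By the analogue of Ionescu's theorem for $\sl_n$ proved in \cite{AC}, we may pick a nonzero nilpotent $X \in \sl_n$ together with a nilpotent $Y \in \sl_n$ such that $X$ and $Y$ generate $\sl_n$. Put $U_1 = \{\exp(tX) : t \in \K\}$ and $U_2 = \{\exp(tY) : t \in \K\}$. The subgroup $\langle U_1, U_2\rangle \subseteq \SL_n$ is constructible (a finite product of the $U_i$, by Noetherianity), hence closed, and connected; its Lie algebra contains $X$ and $Y$, hence contains $\sl_n$, while being contained in $\sl_n$, so $\langle U_1, U_2\rangle = \SL_n$. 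It is precisely here that the word ``nilpotent'' in \cite{AC} is used: the two generators of $\sl_n$ furnished by Kuranishi's theorem need not be nilpotent, so their one-parameter subgroups need not be $\G_a$-subgroups.

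Next, fix affine coordinates $x_1, \dots, x_n$ on $\A^n$ and a polynomial $p \in \K[x_2, \dots, x_n]$ with nonzero constant term and of degree at least two (for instance $p = 1 + x_2^2$); the derivation $\delta = p\,\partial_{x_1}$ is locally nilpotent, so $U_3 = \{\exp(t\delta) : t \in \K\}$ is a $\G_a$-subgroup of $\SAut(\A^n)$. Since $\delta$ does not vanish at the origin, and since $\SL_n$ acts transitively on $\A^n \setminus \{0\}$ for $n \geq 2$, the group $H := \langle U_1, U_2, U_3\rangle = \langle \SL_n, U_3 \rangle$ already acts transitively on $\A^n$.

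It remains to show that $H$ acts infinitely transitively on $\A^n$, and this is the heart of the matter. The strategy is to exhibit, inside $H$, a family of $\G_a$-subgroups whose associated locally nilpotent derivations (i) span the tangent space $T_x\A^n$ at every point $x$, and (ii) form a \emph{saturated} collection, i.e.\ one closed under multiplying a member by a function annihilated by another member; a flexibility criterion in the spirit of \cite{AKZ} then yields infinite transitivity of $H$. Spanning follows quickly: at a point $x \neq 0$ the $\SL_n$-conjugates of linear nilpotent derivations already fill $T_x\A^n$ (the $\SL_n$-orbit is open), and at $x = 0$ one adds $\delta$. Saturation is the delicate point, and this is exactly where the non-linear term of $\delta$ is needed, since the conjugates of $\delta$ by $\SL_n$ and by $U_3$ itself supply the non-invariant functions required to close the family up under multiplication. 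I expect the main obstacle to be organizing this so that the saturated, spanning family stays inside the finitely generated group $H$ rather than inside all of $\SAut(\A^n)$; once that is in place the conclusion is formal, and the restriction $n \geq 2$ enters only through the transitivity of $\SL_n$ on $\A^n \setminus \{0\}$ and the availability of enough independent locally nilpotent derivations.
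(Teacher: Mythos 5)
First, a point of order: the paper does not prove this statement at all --- it is quoted verbatim from \cite[Theorem~5.17]{AKZ}, and the only contribution of the surrounding discussion is that the result of \cite{AC} (nilpotent generating pairs for $\sl_n$) is what allows two of the three subgroups to account for the whole linear part. So the relevant comparison is with the proof in \cite{AKZ}. Your architecture does match that proof: two $\G_a$-subgroups $U_1,U_2$ whose necessarily nilpotent infinitesimal generators generate $\sl_n$, so that $\langle U_1,U_2\rangle=\operatorname{SL}_n(\K)$, plus one non-linear $\G_a$-subgroup $U_3=\exp(t\,p\,\partial_{x_1})$. The first two stages of your argument are correct as written: $\langle U_1,U_2\rangle$ is a closed connected subgroup whose Lie algebra contains $\sl_n$, hence equals $\operatorname{SL}_n(\K)$; and since $\delta=p\,\partial_{x_1}$ moves the origin while $\operatorname{SL}_n(\K)$ is transitive on $\A^n\setminus\{0\}$, the group $H$ is transitive on $\A^n$.

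The genuine gap is the step you yourself flag and then leave open: infinite transitivity is never established. Invoking a ``saturated'' spanning family of locally nilpotent derivations is a correct description of the flexibility criterion underlying \cite{AKZ}, but producing such a family \emph{inside the finitely generated group $H$} is precisely the content of \cite[Theorem~5.17]{AKZ}, not a routine verification. Saturation demands that for each derivation $\partial$ in the family and each $f$ in its kernel the replica $f\partial$ again lie in the family; the full set of replicas of even one non-linear $\delta$ is an infinite-dimensional family of derivations, and there is no a priori reason its exponentials lie in the group generated by three one-parameter subgroups. What \cite{AKZ} actually does is isolate a finite, explicitly computed collection of conjugates of $\delta$ (by elements of $\operatorname{SL}_n(\K)$ and of $U_3$) together with the linear derivations, and verify a weaker but checkable sufficient condition for $m$-transitivity for every $m$. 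None of that computation appears in your proposal, and it is not clear that your particular choice $p=1+x_2^2$ is one for which it closes up --- the third subgroup in \cite{AKZ} is chosen with some care. As it stands, you have proved transitivity, identified correctly where the difficulty lies, and stopped there; the theorem itself remains unproved.
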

	The exponential map establishes a correspondence between nilpotents in a Lie algebra $\g$ and $\G_a$-subgroups in the corresponding algebraic group $G$. Hence Theorem~\ref{theorem} implies that for each $\G_a$-subgroup $U_1$ of  $\operatorname {Sp}_{2n}(\K)$ there exists a $\G_a$-subgroup $U_2$ of  $\operatorname {Sp}_{2n}(\K)$ such that $\langle U_1, U_2 \rangle = \operatorname {Sp}_{2n}(\K)$. In particular, the group $\langle U_1, U_2 \rangle$ acts transitively on $\A^{2n} \setminus \{0\}$.
	
	An algebraic automorphism of $\A^{2n}$ is called a symplectomorphism if its differential preserves a non-degenerate skew-symmetric bilinear form at each point on $\A^{2n}$. Involving one-parameter subgroup of  non-linear symplectomorphisms we can hope for symplectic analogue of \cite[Theorem~5.17]{AKZ}.
	
	\begin{conjecture}
		For all $n \geqslant 2$ one can find three $\G_a$-subgroups $U_1$, $U_2$, $U_3$ of symplectomorphisms of $\A^{2n}$ 
		such that the group $H = \langle U_1, U_2, U_3 \rangle $ acts infinitely transitively on~$\A^{2n}$. 
	\end{conjecture}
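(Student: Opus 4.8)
The plan is to follow the proof of \cite[Theorem~5.17]{AKZ}, replacing the generation statement for $\sl_n$ from \cite{AC} by Theorem~\ref{theorem} and replacing the infinite transitivity of $\SAut(\A^n)$ by its symplectic counterpart. Three ingredients are needed: (i) a symplectic analogue of the flexibility/infinite-transitivity theory, to the effect that the subgroup $G\subset\Aut(\A^{2n})$ generated by \emph{all} $\G_a$-subgroups consisting of symplectomorphisms already acts infinitely transitively on $\A^{2n}$; (ii) a criterion allowing one to witness this infinite transitivity by only \emph{finitely many} such $\G_a$-subgroups; and (iii) Theorem~\ref{theorem}, which will provide the ``linear part'' of the three subgroups.

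First I would fix Darboux coordinates $p_1,q_1,\dots,p_n,q_n$ on $\A^{2n}$ and take $G$ to be generated by the $\G_a$-subgroups $\exp(\K X_F)$, where $X_F$ runs over the complete locally nilpotent Hamiltonian vector fields. Step (i) is to show $G$ acts infinitely transitively on $\A^{2n}$; I expect this to follow from the techniques behind infinite transitivity on flexible affine varieties (cf.\ \cite{AKZ}) once one checks that the complete symplectic locally nilpotent vector fields span the tangent space at every point, so that the relevant ``saturated'' set of $\G_a$-subgroups is nonempty.

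Next, Theorem~\ref{theorem} supplies a nonzero nilpotent $X\in\sp$ and a nilpotent $Y\in\sp$ generating $\sp$; the associated $\G_a$-subgroups $U_1=\exp(\K X)$ and $U_2=\exp(\K Y)$ consist of linear symplectomorphisms and generate $\operatorname{Sp}_{2n}(\K)$, which acts transitively on $\A^{2n}\setminus\{0\}$. It then remains to adjoin a single non-linear $\G_a$-subgroup $U_3=\exp(\K X_F)$ of symplectomorphisms, chosen so that $U_3$ moves the origin (for instance, with $\partial F/\partial q_1$ having nonzero constant term) and so that the orbits of $U_1,U_2,U_3$ through a common point span the tangent space there. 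One first deduces that $H=\langle U_1,U_2,U_3\rangle$ acts transitively on all of $\A^{2n}$, and then, by the criterion in (ii), that this transitive action is infinitely transitive.

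The main obstacle is the combination of (i) and (ii): one must establish the symplectic version of the infinite-transitivity machinery and, at the same time, make sure it can be realized by finitely many $\G_a$-subgroups that are absorbed into $\langle U_1,U_2,U_3\rangle$ (equivalently, that a suitable finite saturated set of generators is produced among conjugates of $U_1,U_2,U_3$). A secondary technical point concerns the origin: since $\langle U_1,U_2\rangle=\operatorname{Sp}_{2n}(\K)$ fixes $0$, one must check that adjoining $U_3$ genuinely restores transitivity at $0$ and that the chosen $U_3$ is compatible with the symplectic constraint, before the criterion from (ii) can be invoked.
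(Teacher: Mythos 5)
This statement is Conjecture~2 of the paper: the author explicitly leaves it \emph{open} and offers no proof, only the remark that Theorem~\ref{theorem} makes a symplectic analogue of \cite[Theorem~5.17]{AKZ} plausible once one involves a one-parameter subgroup of non-linear symplectomorphisms. Your proposal is therefore not being measured against a proof in the paper but against the absence of one, and in that light it should be judged as a research plan rather than an argument. As a plan it is sensible and matches the route the author hints at: use Theorem~\ref{theorem} to obtain two linear $\G_a$-subgroups $U_1$, $U_2$ generating $\operatorname{Sp}_{2n}(\K)$, which acts transitively on $\A^{2n}\setminus\{0\}$, and adjoin one non-linear Hamiltonian $\G_a$-subgroup $U_3$ to deal with the origin and to generate enough non-linear flows.

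The genuine gap is exactly where you place it, in your items (i) and (ii), and it is not a gap that can be waved away with ``I expect this to follow.'' First, infinite transitivity of the group generated by \emph{all} locally nilpotent Hamiltonian $\G_a$-subgroups on $\A^{2n}$ is itself an unproved symplectic analogue of the flexibility machinery; the arguments of \cite{AKZ} and of the flexible-varieties literature use arbitrary replicas and conjugates of locally nilpotent derivations, and it is not automatic that one can stay inside the Lie algebra of Hamiltonian vector fields while carrying out the interpolation steps (the needed perturbations must themselves be symplectic, which is a nontrivial constraint). Second, even granting (i), the passage from ``all'' to ``three'' $\G_a$-subgroups in \cite[Theorem~5.17]{AKZ} rests on a carefully engineered saturated finite set of subgroups whose conjugates under the generated group reproduce enough of the full automorphism group; you would need to exhibit a concrete $U_3$ and verify that the conjugates of $U_1$, $U_2$, $U_3$ inside $H$ produce such a saturated family of \emph{symplectic} $\G_a$-subgroups. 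Neither step is carried out, so the proposal remains a programme consistent with the paper's stated hope, not a proof of the conjecture.
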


\end{document}